\documentclass[]{elsarticle}
\usepackage{amsmath}
\usepackage{amsfonts}
\usepackage{amssymb}
\usepackage[utf8]{inputenc}
\usepackage[T1]{fontenc}
\usepackage[colorlinks=true]{hyperref}
\usepackage[dvipsnames]{xcolor}
\usepackage{graphicx}
\usepackage[normalem]{ulem}
\usepackage{enumerate}
\usepackage{soul}

\newcounter{defi}
\newtheorem{definition}[defi]{Definition}

\newcounter{corr}
\newtheorem{corollary}[corr]{Corollary}

\newtheorem{theorem}{Theorem}

\newenvironment{proof}[1][Proof]{\noindent\textbf{#1.} }{\ \rule{0.5em}{0.5em}}

\newcommand{\D}{\text{d}}

\begin{document}

\begin{frontmatter}

\title{Identifying Berwald Finsler Geometries}

\author{Christian Pfeifer}
\ead{christian.pfeifer@ut.ee}
\ead{christian.pfeifer@zarm.uni-bremen.de}
\address{Laboratory of Theoretical Physics, Institute of Physics, University of Tartu, W. Ostwaldi 1, 50411 Tartu, Estonia}
\address{ZARM, University of Bremen, 28359 Bremen, Germany.}

\author{Sjors Heefer\corref{mycorrespondingauthor}}
\ead{s.j.heefer@tue.nl}
\author{Andrea Fuster}
\ead{a.fuster@tue.nl}
\address{Department of Mathematics and Computer Science,\\Eindhoven University of Technology, Eindhoven, The Netherlands}

\cortext[mycorrespondingauthor]{Corresponding author}

\begin{abstract}
Berwald geometries are Finsler geometries close to (pseudo)-Riemannian geometries. We establish a simple first order partial differential equation as necessary and sufficient condition, which a given Finsler Lagrangian has to satisfy to be of Berwald type. Applied to $(\alpha,\beta)$-Finsler spaces or spacetimes, respectively, this reduces to a necessary and sufficient condition for the Levi-Civita covariant derivative of the geometry defining $1$-form. We illustrate our results with novel examples of $(\alpha,\beta)$-Berwald geometries which represent Finslerian versions of Kundt (constant scalar invariant) spacetimes. The results generalize earlier findings by Tavakol and van den Bergh, as well as the Berwald conditions for Randers and m-Kropina resp. very special/general relativity geometries.
\end{abstract}

\begin{keyword}
Finsler geometry\sep Berwald geometry \sep Lorentzian geometry \sep $(\alpha,\beta)$-metrics\sep Kundt spacetimes
\end{keyword}

\end{frontmatter}

\maketitle


\section{Introduction}\label{sec:intro}
In Finsler geometry the geometry of a manifold is derived from a $1$-homogeneous length measure for curves and its corresponding canonical Cartan non-linear connection on the tangent bundle, instead of from a metric and its Levi-Civita connection, as it is done in (pseudo)-Riemannian geometry. The first one who considered the possibility to derive the geometry of spacetime from a more general length measure than the a metric one, was Riemann himself \cite{Riemann}. Only years later Finsler started a systematic study of such manifolds \cite{Finsler}. Since then, Finsler geometry became an established field in mathematics, usually as generalization of Riemannian geometry, \cite{Bao,Miron}, and gained interest in its application to physics as generalization of pseudo-Riemannian (Lorentzian) geometry \cite{Javaloyes:2018lex,Pfeifer:2019wus,Hohmann:2018rpp,Hohmann:2019sni,Bernal:2020bul}.

Berwald geometries are Finslerian geometries whose canonical Cartan non-linear connection on the tangent bundle is in one to one correspondence to an affine connection on the base manifold \cite{Berwald1926}. In other words the Cartan non-linear connection is linear in its dependence on tangent directions. Berwald geometries can be regarded as Finsler geometries close to (pseudo)-Riemannian geometries. 

In this article we present a necessary and sufficient condition, a first order partial differential equation, which classifies if a Finsler geometry is of Berwald type or not. After introducing the mathematical setup in Section \ref{sec:fins}, the general theorem will be presented in Section \ref{sec:TheTheorem}. We will apply it to $(\alpha,\beta)$-Finsler spaces and spacetimes in Section \ref{sec:AB}, where we find a necessary and sufficient condition on the Levi-Civita covariant derivative of the geometry defining $1$-form, which ensures that the geometry is Berwald. This condition then is used to find Berwald Finsler Lagrangians for which the $1$-form does not have to be covariantly constant. We illustrate our results with novel examples of $(\alpha,\beta)$-Berwald geometries, which represent Finslerian versions of Kundt (constant scalar invariant) spacetimes for both covariantly constant and not covariantly constant $1$-forms. In the end we conclude in Section \ref{sec:conc}.

The results presented here generalize the findings of Tavakol and Van Den Bergh \cite{TAVAKOL198523,Tavakol1986,Tavakol2009} as well as conditions that where found in the context of $(\alpha,\beta)$-Finsler spaces \cite{Szilasi2011,Matsumoto,ShenBacsoCheng} and spacetimes~\cite{Fuster:2018djw,Kostelecky:2011qz,Gomez-Lobo:2016qik}.

\section{Finsler geometry}\label{sec:fins}
We consider an $n$-dimensional smooth manifold $M$ and its tangent bundle $TM$. The later is equipped with manifold induced coordinates, which means that an element $X \in TM$ is labeled as $ X= \dot x^a \partial_a|_x = (x,\dot x)$, where $x$ denotes local coordinates on $M$ and $\dot x$ the coordinate basis components of the vector $X\in T_xM$. The canonical local coordinate basis of the tangent spaces to the tangent bundle, $T_{(x,\dot x)}TM$, is labeled by $\{\partial_a = \frac{\partial}{\partial x^a}, \dot{\partial}_a = \frac{\partial}{\partial \dot x^a} \}$.

A Finsler geometry is a smooth manifold $M$ endowed with a continuous Finsler Lagrangian $L: TM \rightarrow \mathbb{R}$, which has at least the following properties:
\begin{itemize}
	\item $L(x,\dot x)$ is positively homogeneous of degree two with respect to $\dot x$, i.e.\ $L(x, \lambda \dot x) = \lambda^2 L(x,\dot x),\quad \forall \lambda>0$;
	\item the $L$-metric, the Hessian of $L$ with respect to the tangent space coordinates,
	\begin{align}
		g^L_{ab}(x,\dot x) = \frac{1}{2}\dot{\partial}_a \dot{\partial}_b L(x,\dot x)\,,
	\end{align}
	is non-degenerate and $L$ is smooth on a conic subbundle $\mathcal{A}$ of $TM$, such that $TM \setminus \mathcal{A}$ is of measure zero.
\end{itemize}
The Finsler function $F$, which defines the length measures for curves on $M$,
\begin{align}
	S[x] = \int d\tau\ F(x,\dot x)\,,
\end{align}
is derived from $L$ as $F(x,\dot x)=\sqrt{|L(x,\dot x)|}$. We denote Finslerian geometries by a tuple $(M,L)$.

We do not specify the signature of the $L$-metric nor do we further specify the conic subbundle $\mathcal{A}$\footnote{For Finsler spaces $\mathcal{A}$ is usually $TM\setminus\{0\}$, while $\mathcal{A}$ has a more complicated structure for Finsler spacetimes \cite{Javaloyes:2018lex,Hohmann:2018rpp,Hohmann:2019sni,Bernal:2020bul}.}, since the result we present here holds for Finsler spaces with positive definite $L$-metric, for Finsler spacetimes with $L$-metric of Lorentzian signature and for any other choice of signature of the $L$-metric one may like to impose. The only necessary requirement is that the $L$-metric be invertible on a large enough subset of the tangent bundle. When we do not need to distinguish between signatures, we use the term \textit{Finslerian geometry}.

The fundamental object that describes the geometry derived from $L$ is the Cartan non-linear connection, which is the unique torsion-free, $L$-compatible, homogeneous non-linear connection on $\mathcal A$. Its connection coefficients are given by
\begin{align}\label{eq:ncon}
	N^a{}_b(x,\dot x) = \frac{1}{4}\dot{\partial}_b(g^{Laq}(x,\dot x)(\dot x^m \partial_m \dot{\partial}_q L(x,\dot x) - \partial_q L(x,\dot x)))\,.
\end{align}
It defines the Finsler geodesic equation, for arc length parametrized curves $x(\tau)$ on $M$, 
\begin{align}
	\ddot x^a + 2 G^a(x,\dot x) = 0
\end{align}
in terms of the geodesic spray 
\begin{align}\label{eq:geodspray}
	2 G^a(x,\dot x) = N^a{}_b(x,\dot x) \dot x^b\,,
\end{align}
as well as the horizontal derivative $\delta_a = \partial_a - N^b{}_a \dot{\partial}_b$. The $L$-compatibility as defining property manifests in the fact that the horizontal derivative annihilates $L$ and that the $L$-metric is covariantly constant
\begin{align}\label{eq:dL=0}
	\delta_a L(x,\dot x) = 0\,,\quad \dot x^c\delta_c g^L_{ab}(x,\dot x) - N^c{}_a(x,\dot x) g^L_{cb}(x,\dot x) - N^c{}_b(x,\dot x) g^L_{ca}(x,\dot x) = 0\,.
\end{align}
This property of Finsler geometry is crucial for the proof of the main theorem of this article.

For general Finslerian geometries, the Cartan non-linear connection coefficients \eqref{eq:ncon} are $1$-homogeneous with respect to $\dot x$.

\begin{definition}
	A Finsler geometry $(M,L)$ is said to be of Berwald type if the Cartan non-linear connection coefficients are linear in $\dot x$
	\begin{align}\label{eq:NLConn}
		N^a{}_{b}(x,\dot x) = \Xi^{a}{}_{bc}(x)\dot x^c\,.
	\end{align}
\end{definition}
This implies that the functions $\Xi^{a}{}_{bc}(x)$ are connection coefficients of an affine connection on $M$. Equivalently one can define Berwald geometries by the demand that the geodesic spray \eqref{eq:geodspray} be quadratic in $\dot x$. The definition goes back to the original article by Berwald \cite{Berwald1926}, a modern overview on Berwald geometries can be found in \cite{Szilasi2011}. For positive definite Finsler geometry a well-known result by Szabo \cite{Szabo} states that Berwald spaces are Riemann-metrizable, i.e.\ the connection coefficients $\Xi^a{}_{bc}$ are the Levi-Civita connection coefficients of a Riemannian metric. For indefinite Finsler geometries, however, this result does not hold in general, in the sense that indefinite Finsler geometries are not necessarily metrizable in the pseudo-Riemannian sense ~\cite{Fuster:2020upk}.

The first condition that comes to mind to characterise Berwald geometries is $\dot{\partial}_b\dot{\partial}_c \dot{\partial}_d G^a(x,\dot x) = \dot{\partial}_b\dot{\partial}_c N^a{}_d(x,\dot x) = 0$, which in general is difficult to evaluate and analyse. In the next section we present a simpler, first order partial differential equation, which is necessary and sufficient for a Finsler Lagrangian to be of Berwald type.

\section{The Berwald Condition}\label{sec:TheTheorem}

The strategy to find the \emph{Berwald condition} in this section is as follows. We introduce an auxiliary positive definite metric $g$ on $M$, which defines the non-vanishing function $A=g(\dot x, \dot x)$, in local coordinates define by components of the metric as $A= g_{ab}(x)\dot x^a \dot x^b$ on $TM\setminus \{0\}$. With help of this function we can introduce the factor $\Omega = \tfrac{L}{A}$ on $\mathcal{A}\subseteq TM\setminus \{0\}$ to rewrite any Finsler Lagrangian as $L=A\Omega$. For Finsler Lagrangians of this form it is possible to derive a necessary and sufficient  first order partial differential equation $\Omega$ has to satisfy as in order for the Finslerian geometry to be of Berwald type.


Having proven this statement we will discuss the dependence of this result on the choice of the metric $g$. We will demonstrate that if $L$ satisfies the Berwald condition for a decomposition with respect to the metric $g$, then it satisfies the Berwald condition for a decomposition $L=\tilde g \tilde \Omega$ in terms of any other metric $\tilde g$ on the set $\mathcal{B}\subset TM\setminus \{0\}$, where $\mathcal{B}$ is the set on which $\tilde g(\dot x, \dot x)\neq 0$. The other way around, this implies that if one would start with a decomposition of $L$ with respect to a metric $\tilde g$ for which $\mathcal{B}\neq TM\setminus \{0\}$ and one would prove the Berwald condition only on $\mathcal{B}$, then one can extend the proof by changing the decomposition of $L$ from $\tilde g$ to any positive definite metric $g$.

\begin{theorem}\label{thm:1}
	Let $(M,L)$ be a manifold equipped with a Finsler Lagrangian and let $g$ be an auxiliary positive definite metric on $M$. On $\mathcal{A}$ we can write
	\begin{align}
		L = \Omega A\,,
	\end{align}
	with $\Omega = L A^{-1}$. 
	\begin{enumerate}
		\item On the subbundle $\mathcal{A}$, $(M,L)$ is of Berwald type if and only if there exists  a $(1,2)$-tensor field $T$ on $M$, such that $\Omega$ satisfies
			\begin{align}\label{eq:BerwCond}
				\partial_a \Omega - \Gamma^b{}_{ac}\dot x^c\dot{\partial}_b \Omega = T^b{}_{ac}\dot x^c \left(\dot{\partial}_b \Omega + \frac{2 \dot x_b \Omega}{A}\right)\,,
			\end{align}
			where $\Gamma^a{}_{bc} = \tfrac{1}{2}g^{am}(\partial_bg_{ac}+\partial_cg_{ab} - \partial_mg_{bc} )$ are the Christoffel symbols of the auxillary metric g.
		\item The geodesic spray components of the Berwald type Finslerian geometry are given by
		\begin{align}\label{eq:geodsprayB}
			G^a = \frac{1}{2}(\Gamma^a{}_{bc} + T^a{}_{bc})\dot x^b \dot x^c\,.
		\end{align}
	\end{enumerate}
\end{theorem}

\noindent The proof of the theorem uses ideas from Tavakol and van den Bergh \cite{Tavakol1986}.\vspace{10pt}

\begin{proof}[Proof of Theorem 1]
	The starting point for the proof is to realize that $\delta_a L = 0$ implies the equation
	\begin{align}\label{eq:deltaLproof}
		 0&=A \delta_a \Omega + 2 \dot x_c (\Gamma^c{}_{ab}\dot x^b - N^c{}_a)\Omega\nonumber\\
		 &= A (\partial_a\Omega - N^b{}_a\dot{\partial}_b\Omega) + 2 \dot x_c (\Gamma^c{}_{ab}\dot x^b - N^c{}_a)\Omega\,.
	\end{align}
	
	Assume $L$ defines a Finslerian geometry of Berwald type. Then its connection coefficients can be written as $N^a{}_b(x,\dot x) = \Xi^a{}_{bc}(x)\dot x^c$, see \eqref{eq:NLConn}. Since $\Xi^a{}_{bc}(x)$ are affine connection coefficients, the can written as a sum of the Christoffel symbols of the auxiliary metric $g$ and a tensor field $T^a{}_{bc}(x)$, which yields $N^a{}_b = (\Gamma^a{}_{bc}(x) + T^a{}_{bc}(x))\dot x^c$. Using this in equation \eqref{eq:deltaLproof} we obtain the Berwald condition \eqref{eq:BerwCond} with tensor $T^a{}_{bc} = \Xi^a{}_{bc}(x) - \Gamma^a{}_{bc}(x)$.
	
	The other way around, assume \eqref{eq:BerwCond} holds for some tensor components $T^a{}_{bc}$. Solving for $\partial_a \Omega$ gives
	\begin{align}
		\partial_a \Omega = \Gamma^b{}_{ac}\dot x^c\dot{\partial}_b \Omega + T^b{}_{ac}\dot x^c \left(\dot{\partial}_b \Omega + \frac{2 \dot x_b \Omega}{A}\right)\,.
	\end{align} 
	Employing this identity in \eqref{eq:deltaLproof} yields
	\begin{align}\label{eq:proof2}
		((\Gamma^b{}_{ac} + T^b{}_{ac})\dot x^c - N^b{}_a) (A\dot{\partial}_b \Omega + 2 \dot x_b \Omega) = 0\,.
	\end{align}
	Realizing that $\dot{\partial}_b L = A \dot{\partial}_b \Omega + 2 \dot x_b \Omega$ and applying another $\dot \partial_d$ derivative to the above equation results in
	\begin{align} \label{eq:proof3}
		2 g^L_{bd}((\Gamma^b{}_{ac} + T^b{}_{ac})\dot x^c - N^b{}_a) + (A\dot{\partial}_b \Omega + 2 \dot x_b \Omega)(\Gamma^b{}_{ad} + T^b{}_{ad} - \dot{\partial}_dN^b{}_a) = 0\,.
	\end{align}
	Multiplication with $\dot x^a$ simplifies the expression to 
	\begin{align}\label{eq:proof4}
		g^L_{bd}\dot x^a((\Gamma^b{}_{ac} + T^b{}_{ac})\dot x^c - N^b{}_a)=0\,,
	\end{align}
	since the second term in \eqref{eq:proof3} becomes precisely the left hand side of \eqref{eq:proof2}. This statement is true due to the symmetry in the lower indices of $\dot{\partial}_dN^b{}_a$ (i.e. torsion-freeness), the $1$-homogeneity of $N^a{}_b$ with respect to $\dot x^a$ and Euler's theorem for homogeneous functions\footnote{Let $f(x)$ be a $r$-homogeneous function with respect to x, i.e.\ $f(\lambda x) = \lambda^r f(x)$, then $x^a\partial_a f(x) = r f(x)$, see for example \cite{Bao}}.
	
	Since we assumed the $L$-metric to be non-degenerate on $\mathcal{A}$, equation \eqref{eq:proof4} yields the desired result that $ N^b{}_a \dot x^a = (\Gamma^b{}_{ac} + T^b{}_{ac})\dot x^c\dot x^a = 2 G^a$, which completes the proof.
\end{proof}

This theorem offers a simple way how to find Berwald Finsler geometries in practice, since it reduces the problem to finding a function $\Omega$ which satisfies a first order partial differential equation. Moreover, it is a direct extension of the results of Tavakol and van den Berg \cite{Tavakol1986}, which is an immediate corollary.

\begin{corollary}
	A Finslerian geometry 
	\begin{align}
		L = e^{2 \sigma} A
	\end{align}
	has a geodesic spray given by  $G^a = \frac{1}{2} \Gamma^a{}_{bc} \dot x^b \dot x^c$ if and only if 
	\begin{align}\label{eq:TvdB}
	\partial_a \sigma - \Gamma^b{}_{ac}\dot x^c\dot{\partial}_b \sigma = 0\,.
	\end{align}
\end{corollary}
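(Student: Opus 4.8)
The plan is to read off this corollary as the special case $T \equiv 0$ of Theorem \ref{thm:1}, after writing $L$ in the form $L = \Omega A$ with conformal factor $\Omega := e^{2\sigma}$. First I would check admissibility: since $L$ is positively $2$-homogeneous and $A$ is $2$-homogeneous in $\dot x$, the factor $e^{2\sigma}$, hence $\sigma$ itself, is $0$-homogeneous in $\dot x$, so $\Omega = e^{2\sigma}$ is an admissible $0$-homogeneous function in the sense of Theorem \ref{thm:1}, with the background metric $g$ taken to be precisely the one entering $A = g_{ab}\dot x^a\dot x^b$. I would also record that the hypothesis ``$G^a = \tfrac12\Gamma^a{}_{bc}\dot x^b\dot x^c$'' is exactly the Berwald property with $T^a{}_{bc} = 0$ in the notation of Observation \ref{obs:2}: a quadratic spray forces $N^a{}_b = \Xi^a{}_{bc}\dot x^c$ with $\Xi$ symmetric, and matching $\Xi^a{}_{bc}\dot x^b\dot x^c = \Gamma^a{}_{bc}\dot x^b\dot x^c$ for all $\dot x$ gives $\Xi = \Gamma$, i.e. $T = 0$, and the converse is immediate.

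Given this, both implications are obtained by specializing the two halves of the proof of Theorem \ref{thm:1} to $T \equiv 0$. Setting $T = 0$ in the Berwald condition \eqref{eq:BerwCond} kills its right-hand side and leaves $\partial_a\Omega - \Gamma^b{}_{ac}\dot x^c\dot{\partial}_b\Omega = 0$; conversely this equation, fed through \eqref{eq:deltaLproof} with $N^b{}_a = \Gamma^b{}_{ac}\dot x^c$ (so that the term proportional to $\Gamma^c{}_{ab}\dot x^b - N^c{}_a$ drops out), and then through the argument of the theorem's proof, yields $2G^a = \Gamma^a{}_{bc}\dot x^b\dot x^c$. It then only remains to translate between $\Omega$ and $\sigma$: using $\partial_a\Omega = 2e^{2\sigma}\partial_a\sigma$ and $\dot{\partial}_b\Omega = 2e^{2\sigma}\dot{\partial}_b\sigma$ and dividing by the nowhere-vanishing factor $2e^{2\sigma}$ turns $\partial_a\Omega - \Gamma^b{}_{ac}\dot x^c\dot{\partial}_b\Omega = 0$ into \eqref{eq:TvdB}.

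I expect the only delicate point is logical bookkeeping rather than computation. The \emph{statement} of Theorem \ref{thm:1} asserts merely the existence of \emph{some} metric and tensor, so to reach the sharper claim that the spray is built from the Christoffel symbols of the metric appearing in $A$ with $T$ vanishing, one must invoke the \emph{proof} of Theorem \ref{thm:1} — specifically the identity $2G^a = (\Gamma^a{}_{bc} + T^a{}_{bc})\dot x^b\dot x^c$ established there — and then set $T = 0$. A minor additional point, already present in the proof of the theorem, is that in indefinite signature $A$ vanishes on the null cone inside $\mathcal{A}$, so the step dividing \eqref{eq:deltaLproof} by $A$ is carried out away from the null cone and extended by continuity.
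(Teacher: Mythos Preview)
Your proposal is correct and follows the same route as the paper: set $\Omega = e^{2\sigma}$ and $T^a{}_{bc}=0$ in Theorem~\ref{thm:1} (more precisely, in its proof, so that the identification $2G^a=(\Gamma^a{}_{bc}+T^a{}_{bc})\dot x^b\dot x^c$ is available), then divide the resulting condition by $2e^{2\sigma}$. The paper's own proof is the one-line version of exactly this; your additional remarks on homogeneity, on reading $T=0$ from the hypothesis on $G^a$, and on the null-cone caveat are valid elaborations rather than a different argument.
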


\begin{proof}[Proof of Corollary 1]
	The proof is obtained by simply inserting $\Omega = e^{2 \sigma}$ and $T^a{}_{bc}= 0$ into \eqref{eq:BerwCond}.
\end{proof}

To conclude this section we discuss the decomposition of the Finsler Lagrangian with respect to another auxiliary metric $\tilde g$ of arbitrary signature.

Since $\tilde g$ is of arbitrary signature the function $\tilde A = \tilde g_{ab}(x)\dot x^a \dot x^b$ is non-vanishing only on $\mathcal{B}\subset TM\setminus \{0\}$. Thus, on $\mathcal{B}$ we can write
\begin{align}\label{eq:differentmetrics}
	L = A \Omega = \tilde A \tilde \Omega.
\end{align}
If $L$ is of Berwald type then $A$ and $\Omega$ satisfy \eqref{eq:BerwCond}, which implies the existence of the tensor field with components $T^a{}_{bc}$. Moreover $\delta_a L=0$ implies for $\tilde A$ and $\tilde \Omega$
\begin{align}\label{eq:deltatilde}
		\tilde A (\partial_a \tilde \Omega - N^b{}_a\dot{\partial}_b\tilde \Omega) + 2 \dot {\tilde x}_c (\tilde \Gamma^c{}_{ab}\dot x^b - N^c{}_a)\tilde \Omega = 0\,.
\end{align}
Using again that we assumed $L$ is Berwald, we can expand $N^a{}_b = (\Gamma^a{}_{bc}(x) + T^a{}_{bc}(x))\dot x^c$ and define a new tensor field with components $\tilde T^a{}_{bc} = T^a{}_{bc} + \Gamma^a{}_{bc} - \tilde \Gamma^a{}_{bc}$. Using these in \eqref{eq:deltatilde} yields that on $\mathcal{B}$, $\tilde A$ and $\tilde \Omega$ satisfy
\begin{align}
	\partial_a \tilde \Omega - \Gamma^b{}_{ac}\dot x^c\dot{\partial}_b \tilde \Omega = \tilde T^b{}_{ac}\dot x^c \left(\dot{\partial}_b \tilde \Omega + \frac{2 \dot x_b \Omega}{\tilde A}\right)\,,
\end{align}
which is again the Berwald condition \eqref{eq:BerwCond} just expressed in terms of $\tilde A$ and $\tilde \Omega$. 

Alternatively one could start by using the Berwald condition on $\mathcal{B}$ for $\tilde g$ and $\tilde \Omega$ to determine if a Finslerian geometry is of Berwald type or not, and then rewrite the condition in terms of a positive definite metric $g$ to extend the result to  the whole set $\mathcal{A}$.\footnote{While this article was under revision, this has also been pointed out in \cite[Sec. 4.1]{Hohmann:2020mgs}.}

\section{$(\alpha,\beta)$-Berwald Finsler geometries}\label{sec:AB}
We will now apply Theorem \ref{thm:1} to a special class of Finslerian geometries, so-called $(\alpha,\beta)$-geometries, and demonstrate the application of our findings on some examples. An $(\alpha,\beta)$-geometry is one for which the Finsler Lagrangian $L = L(\alpha,\beta)$ is constructed from a Riemannian metric $a$ and a $1$-form, $b = b_a(x)dx^a$, on $M$, defining the variables $\alpha = \alpha(\dot x,\dot x) = \sqrt{a_{ab}(x)\dot x^a \dot x^b}$, and $\beta = \beta(\dot x) = b_a(x)\dot x^a$. Since we want to include the possibility of $a$ being pseudo-Riemannian, we parametrize such Finsler geometries by the variables $\mathfrak{a}=\mathfrak{a}(\alpha) = \alpha^2 =a_{ab}(x)\dot x^a \dot x^b$ and $\beta$.

This kind of Finslerian geometries, constructed from simple building blocks, have been considered in mathematics and physics. Famous examples are for example Randers geometry \cite{Randers} and the m-Kropina \cite{Kropina} respectively very special/general relativity (VGR) geometries \cite{Cohen:2006ky,Gibbons:2007iu,Fuster:2018djw}. The latter were originally Bogoslovsky in the context of invariance properties of the massless wave equation \cite{Bogoslovsky}.

For Randers geometries, which are defined by the Finsler Lagrangian $L = (\sqrt{\mathfrak{a}}+\beta)^2$, it is well known that they are Berwald if and only if the $1$-form $b$ is covariantly constant with respect to the Levi-Civita connection of the metric~$a$, $\nabla_a b_b = 0$. For m-Kropina/$VGR$ geometries of the type $L= \mathfrak{a}^n \beta^{2(1-n)}$ it is known that there must exist a function $C(x)$ on $M$ such that $\nabla_a b_b = C(x)(2(1+n)b_a b_b - n a^{-1}(b,b)a_{ab})$ holds, for $L$ to be of Berwald type, see~\cite{Fuster:2018djw}.

Using Theorem \ref{thm:1}, we find a necessary and sufficient condition for general $(\alpha,\beta)$-Finsler geometries to be of Berwald type, characterized by the Levi-Civita covariant derivative of the involved $1$-form. As the choice of the auxiliary metric in Theorem \ref{thm:1} is completely arbitrary, we choose here to use the metric $a$ itself, i.e. $A = \mathfrak a = \alpha^2$.

\begin{corollary}\label{corr:AB}
	Let $\mathfrak{a} =  a(\dot x, \dot x) = a_{ab}(x)\dot x^a \dot x^b$ and $\beta=\beta(\dot x) = b_a(x) \dot x^a$, where $b_a$ are the components of a $1$-form $b$ on~$M$. Any $(\alpha,\beta)$-Finsler geometry $(M,L)$, i.e. $L = L(\mathfrak{a},\beta) = \Omega(\frac{\beta^2}{\mathfrak{a}})\mathfrak{a}$ is of Berwald type if and only if there exists a tensor field $T^a{}_{bc}$ on $M$ such that
	\begin{align}\label{eq:CDBerwaldAB2}
		\dot x^c \nabla_a b_c = T^b{}_{ac}\dot x^c b_b + T^b{}_{ac}\dot x^c \dot x_b \left(\frac{\Omega}{\Omega' \beta} - \frac{\beta}{\mathfrak{a}}\right)\,,
	\end{align}
	where $\Omega'$ denotes the derivative of $\Omega$ with respect to its argument. A further $\dot{\partial}_d$ derivative yields the equivalent condition
	\begin{align}\label{eq:CDBerwaldAB}
	\nabla_a b_d = T^b{}_{ad}b_b &+ \left(T^b{}_{ad}\dot x_b + T^b{}_{ac}\dot x^c g_{db}\right)\left(\frac{\Omega}{\Omega' \beta} - \frac{\beta}{\mathfrak{a}}\right) \nonumber\\
	&+ T^b{}_{ac}\dot x^c \dot x_b \left[ \dot x_d \frac{2 \beta}{\mathfrak{a}^2} \frac{\Omega \Omega''}{\Omega'^2} -  b_d \left( -\frac{1}{\mathfrak{a}} + \frac{\Omega}{\Omega' \beta^2} + \frac{2 \Omega \Omega''}{\Omega'^2 \mathfrak{a}} \right) \right]\,.
	\end{align}
\end{corollary}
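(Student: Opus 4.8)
The plan is to specialise Theorem~\ref{thm:1} to $L=\Omega(B^2/A)\,A$, with $g$ and $\beta$ the data defining the $(\alpha,\beta)$-structure, and to convert the first order equation \eqref{eq:BerwCond} into the covariant condition \eqref{eq:CDBerwaldAB2}. First I would note that for such an $L$ the function $\Omega=LA^{-1}$ of Observation~1 is precisely $\Omega(B^2/A)$, so Theorem~\ref{thm:1} applies with this $A=g(\dot x,\dot x)$. One point deserves a remark: Theorem~\ref{thm:1} quantifies existentially over the metric, whereas here $g$ is prescribed. For the ``if'' direction this is harmless, since \eqref{eq:CDBerwaldAB2} will be shown equivalent to \eqref{eq:BerwCond} for this very $g$ and the given $T$, whence Theorem~\ref{thm:1} yields a Berwald structure. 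For the ``only if'' direction one uses that a Berwald $(M,L)$ has $N^a{}_b=\Xi^a{}_{bc}\dot x^c$ for an affine connection $\Xi$ on $M$; setting $T^a{}_{bc}:=\Xi^a{}_{bc}-\Gamma^a{}_{bc}$ with $\Gamma$ the Levi-Civita connection of the prescribed $g$ gives a $(1,2)$-tensor symmetric in its lower indices (a difference of torsion-free connections), and \eqref{eq:BerwCond} holds for this $g$ and this $T$ exactly as in the proof of Theorem~\ref{thm:1}. Hence it suffices to rewrite \eqref{eq:BerwCond} for $L=\Omega(B^2/A)A$.

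The computational heart is this rewriting. Using $\dot{\partial}_b A=2\dot x_b$, $\dot{\partial}_b B=\beta_b$, $\partial_a B=(\partial_a\beta_c)\dot x^c$, $\partial_a A=(\partial_a g_{cd})\dot x^c\dot x^d$ and the chain rule, I would compute
\begin{align}
\dot{\partial}_b\Omega=\frac{2B\Omega'}{A}\Big(\beta_b-\frac{B}{A}\dot x_b\Big)\,,\qquad \dot{\partial}_b\Omega+\frac{2\dot x_b\Omega}{A}=\frac{2B\Omega'}{A}\Big[\beta_b+\Big(\frac{\Omega}{\Omega' B}-\frac{B}{A}\Big)\dot x_b\Big]\,.
\end{align}
Substituting into the left-hand side of \eqref{eq:BerwCond} and inserting $\partial_a\beta_c=\nabla_a\beta_c+\Gamma^b{}_{ac}\beta_b$ together with $(\partial_a g_{cd})\dot x^c\dot x^d=2\Gamma^b{}_{ac}\dot x^c\dot x_b$ (metric compatibility of $\nabla$), all Christoffel terms cancel and the left-hand side collapses to $\tfrac{2B\Omega'}{A}\,(\nabla_a\beta_c)\dot x^c$; the right-hand side, using the second formula above, equals $\tfrac{2B\Omega'}{A}$ times the right-hand side of \eqref{eq:CDBerwaldAB2}. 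Dividing by the common factor $\tfrac{2B\Omega'}{A}$ — non-zero away from a measure-zero set, assuming $\Omega'$ does not vanish identically, i.e. excluding the trivial case in which $L$ reduces to a pseudo-Riemannian metric — and using continuity to extend over all of $\mathcal{A}$, gives \eqref{eq:CDBerwaldAB2}.

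To reach the equivalent form \eqref{eq:CDBerwaldAB} I would simply apply $\dot{\partial}_d$ to \eqref{eq:CDBerwaldAB2}. The left-hand side becomes $\nabla_a\beta_d$; on the right, $\dot{\partial}_d(T^b{}_{ac}\dot x^c\beta_b)=T^b{}_{ad}\beta_b$, $\dot{\partial}_d(T^b{}_{ac}\dot x^c\dot x_b)=T^b{}_{ad}\dot x_b+T^b{}_{ac}\dot x^c g_{db}$, and a chain-rule computation with $\dot{\partial}_d\Omega'=\tfrac{2B\Omega''}{A}(\beta_d-\tfrac{B}{A}\dot x_d)$ and $\dot{\partial}_d B=\beta_d$ yields
\begin{align}
\dot{\partial}_d\Big(\frac{\Omega}{\Omega' B}-\frac{B}{A}\Big)=\Big(\frac{1}{A}-\frac{\Omega}{\Omega' B^2}-\frac{2\Omega\Omega''}{\Omega'^2 A}\Big)\beta_d+\frac{2B\Omega\Omega''}{\Omega'^2 A^2}\,\dot x_d\,,
\end{align}
and reassembling the three pieces reproduces \eqref{eq:CDBerwaldAB}. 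The converse follows by contracting \eqref{eq:CDBerwaldAB} with $\dot x^d$ and using $\dot x^d\dot x_d=A$, $\dot x^d\beta_d=B$, which collapses it back to \eqref{eq:CDBerwaldAB2}; hence the two conditions are equivalent.

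I expect the main obstacle to be organisational rather than deep: making sure every non-covariant (Christoffel) term on the left of \eqref{eq:BerwCond} cancels and that $\tfrac{2B\Omega'}{A}$ genuinely factors out of both sides simultaneously — this is where the $(\alpha,\beta)$-structure is used essentially — and then carrying the somewhat lengthy chain-rule bookkeeping of the $\Omega''$ terms through to \eqref{eq:CDBerwaldAB} without sign errors. The only conceptual subtlety is the one flagged above, namely reconciling the fixed metric $g$ of the $(\alpha,\beta)$-data with the existentially quantified metric in Theorem~\ref{thm:1}.
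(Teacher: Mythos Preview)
Your proposal is correct and follows essentially the same route as the paper: compute $\partial_a\Omega$ and $\dot\partial_a\Omega$ by the chain rule, substitute into \eqref{eq:BerwCond}, observe that the Christoffel terms cancel and that the common factor $\tfrac{2B\Omega'}{A}$ drops out, leaving \eqref{eq:CDBerwaldAB2}. You are in fact more thorough than the paper's own proof, which is very terse and neither discusses the fixed-versus-existential metric issue for the ``only if'' direction nor carries out the $\dot\partial_d$ differentiation to \eqref{eq:CDBerwaldAB}; your treatment of both points is sound.
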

	
\begin{proof}[Proof of Corollary 2]
	Consider $\Omega = \Omega(\frac{\beta^2}{\mathfrak{a}})$. Employing the identities
	\begin{align}
		\partial_a \Omega &= \Omega'\frac{2 \beta}{\mathfrak{a}}\left( \dot x^c \partial_ab_c - \frac{\beta}{\mathfrak{a}}\Gamma^b{}_{ac}\dot x^c \dot x_b \right)\,,\\
		\dot {\partial}_a \Omega &= \Omega'\frac{2 \beta}{\mathfrak{a}}\left(b_a - \frac{\beta}{\mathfrak{a}}\dot x_a\right)\,,
	\end{align} 
	in \eqref{eq:BerwCond} yields the desired expression
	\begin{align}
		\dot x^c \nabla_a b_c = T^b{}_{ac}\dot x^c b_b + T^b{}_{ac}\dot x^c \dot x_b \left(\frac{\Omega}{\Omega' \beta} - \frac{\beta}{\mathfrak{a}}\right)\,,
	\end{align}
	which completes the proof.
\end{proof}

In particular the choice $T^a{}_{bc}=0$ always exist, so this corollary also reproduces the result that a sufficient condition for an $(\alpha, \beta)$-Finsler geometry to be Berwald is that $b$ is covariantly constant with respect to the metric $a$. This is well-known in the positive definite case and was recently shown to hold for arbitrary signature of $g^L$ as well (Theorem 2 in \cite{Fuster:2018djw}). It also follows immediately that if $b$ is \textit{not} covariantly constant, then the connection of the resulting Finsler geometry will never be equal to the Levi-Civita connection of $a$, since $T^a{}_{bc}\neq 0$ determines the deviation of the affine connection of the Berwald space from the Christoffel symbols of $a$.

In case we choose a certain form of the tensor components $T^a{}_{bc}$ we can find the most general $(\alpha,\beta)$-Finsler Lagrangian which can be Berwald, without demanding that $\beta$ is covariantly constant.
\begin{corollary}\label{corr:3}
	Consider the most general $(1,2)$-tensor field, which is symmetric in its vector field arguments and whose components are constructed from the $1$-form $b$ and the metric $a$, not involving further derivatives of these building blocks,
	\begin{align}\label{tensorabc}
	T^a{}_{bc} = \lambda b^a b_b b_c + \rho (b_c \delta^a_b + b_b \delta^a_c) + \sigma \beta^a a_{bc}\,,
	\end{align}
	where $\lambda, \rho$ and $\sigma$ are smooth functions on $M$. In order to find $(\alpha,\beta)$-Berwald Finsler Lagrangians of the type $L = \Omega \mathfrak{a}$ with a non covariantly constant $1$-form~$\beta$, the functions must be related as
	\begin{align}
		\rho = - \sigma, \quad \mathrm{and}\quad \frac{\sigma}{\lambda} = const.\,.
	\end{align}
	Moreover, $\Omega$ must be of the form
	\begin{align}\label{eq:generalABOm}
	\Omega = \left(\frac{\beta^2}{\mathfrak{a}}\right)^{-n} \left( c + m \frac{\beta^2}{\mathfrak{a}} \right)^{n+1}\,,
	\end{align} 
	where $n$, $m$, and $c$ are constants. The $1$-form $\beta$ must satisfy
	\begin{align}\label{eq:CDgeneralABOm}
	\nabla_ab_b = q\left([c(1-n)+m g^{-1}(\beta,\beta)]b_a b_b + c n g^{-1}(\beta,\beta)g_{ab}\right)\,,
	\end{align}
	for some function $q=q(x)$.
\end{corollary}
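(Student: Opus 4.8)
The plan is to substitute the specific tensor ansatz \eqref{tensorabc} into the Berwald condition \eqref{eq:CDBerwaldAB2} of Corollary \ref{corr:AB} and extract, step by step, the constraints on $\lambda,\rho,\sigma$, on $\Omega$, and on $\nabla\beta$. First I would compute the two contractions that appear in \eqref{eq:CDBerwaldAB2}, namely $T^b{}_{ac}\dot x^c\beta_b$ and $T^b{}_{ac}\dot x^c\dot x_b$, in terms of the scalar building blocks $B=\beta(\dot x)$, $A=g(\dot x,\dot x)$, $\beta^2 := g^{-1}(\beta,\beta)$, and the covector $\beta_a$. Writing $r := B^2/A$ for the argument of $\Omega$, the left-hand side $\dot x^c\nabla_a\beta_c$ is linear in $\dot x$ with coefficients built from $\nabla_a\beta_b$, while the right-hand side becomes a sum of terms each carrying a definite power structure in $B$ and $A$ multiplied by either $\beta_a$ or $\dot x_a$. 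The key observation is that $\frac{\Omega}{\Omega' B}-\frac{B}{A}$ is, up to the explicit $1/B$, a function of $r$ alone, so the right-hand side organizes into a part proportional to $\beta_a$ and a part proportional to $\dot x_a$, each with a specific $r$-dependence.

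Next I would impose that \eqref{eq:CDBerwaldAB2} hold for \emph{all} $\dot x$ in the cone $\mathcal A$, treating $B$ and $A$ (equivalently $r$ and the overall scale) as independent. Because $\nabla_a\beta_b$ does not depend on $\dot x$, the left side is strictly quadratic/linear in $\dot x$ in a fixed way; matching it against the right side forces the $r$-dependent coefficients on the right to collapse to at most the same polynomial type. This is where the differential equation for $\Omega$ appears: demanding that the $\dot x_a$-coefficient (which multiplies $T^b{}_{ac}\dot x^c\dot x_b$ and hence an extra factor carrying $r$) reduce to something compatible with a $\dot x$-independent $\nabla\beta$ yields an ODE in $r$ for the combination $\tfrac{\Omega}{\Omega'B}-\tfrac{B}{A}$, whose general solution is precisely \eqref{eq:generalABOm} with integration constants $n,m,c$. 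Feeding $T^a{}_{bc}$ from \eqref{tensorabc} into the $\beta_a$-part then produces algebraic relations among $\lambda,\rho,\sigma$; requiring consistency (so that a single function $q(x)$ can absorb the $\dot x$-independent prefactor in $\nabla_a\beta_b$) is what forces $\rho=-\sigma$ and $\sigma/\lambda=\mathrm{const}$, and reading off the remaining tensor structure gives \eqref{eq:CDgeneralABOm}.

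The main obstacle I anticipate is the bookkeeping in the separation-of-variables step: one must be careful that ``$B$ and $A$ independent'' is used correctly — only the direction-dependence matters, and $r=B^2/A$ together with the homogeneity degree are the genuine free parameters — and that the terms $T^b{}_{ad}\dot x_b+T^b{}_{ac}\dot x^c g_{db}$ versus $T^b{}_{ac}\dot x^c\dot x_b$ in the $\dot\partial_d$-differentiated form \eqref{eq:CDBerwaldAB} are disentangled without spurious solutions. A secondary subtlety is handling the degenerate branches of the $\Omega$-ODE (e.g. $\Omega'=0$, or $c=0$, or $m=0$), which correspond to the already-known Randers and $m$-Kropina/VGR cases and should be shown to be limits of \eqref{eq:generalABOm} rather than genuinely new families. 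Once the ODE is solved and the algebraic relations among $\lambda,\rho,\sigma$ are extracted, assembling \eqref{eq:CDgeneralABOm} with the identification of $q$ in terms of $\lambda$ (or $\sigma$) and the constant ratio is routine.
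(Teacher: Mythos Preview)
Your overall strategy is the paper's own: plug the ansatz \eqref{tensorabc} into \eqref{eq:CDBerwaldAB2}, split the right-hand side into a $\beta_a$-part and a $\dot x_a$-part, and enforce that each coefficient have the correct $\dot x$-dependence. Two points of your bookkeeping are reversed, however, and would trip you up in execution.

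First, you attribute the ODE for $\Omega$ to the $\dot x_a$-coefficient and the algebraic constraint $\rho=-\sigma$ to the $\beta_a$-coefficient. It is the other way around. With the ansatz one finds
\[
T^b{}_{ac}\dot x^c\dot x_b = (\lambda B^2+\rho A)\,\beta_a + (\rho+\sigma)B\,\dot x_a,
\qquad
T^b{}_{ac}\dot x^c\beta_b = B\bigl(\lambda\,g^{-1}(\beta,\beta)+2\rho\bigr)\beta_a + \sigma\,g^{-1}(\beta,\beta)\,\dot x_a,
\]
so the $\dot x_a$-coefficient in \eqref{eq:CDBerwaldAB2} is $\sigma\,g^{-1}(\beta,\beta)+(\rho+\sigma)\bigl(\tfrac{\Omega}{\Omega'}-s\bigr)$, $s=B^2/A$. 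Requiring this to be $\dot x$-independent forces either $\rho=-\sigma$ or the trivial pseudo-Riemannian branch $\tfrac{\Omega}{\Omega'}-s=\text{const}$. Only after imposing $\rho=-\sigma$ does the $\beta_a$-coefficient yield the nontrivial second-order ODE for $\Omega(s)$ (the paper extracts it by demanding the second $\dot\partial$-derivative of that coefficient vanish and reads off the $g_{ab}$-component).

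Second, the constraint $\sigma/\lambda=\text{const}$ does \emph{not} come from ``a single function $q(x)$ absorbing the prefactor''. The ODE for $\Omega$ has $x$-dependent coefficients through $\sigma(x),\lambda(x)$, and its general solution carries integration ``constants'' $c_1(x),c_2(x)$. The requirement that $L$ be a genuine $(\alpha,\beta)$-Lagrangian, i.e.\ that $\Omega$ depend on $(x,\dot x)$ only through $s=B^2/A$, is what forces $c_1=\sigma/n$ for a true constant $n$ and then $\sigma/\lambda$ to be constant. The function $q$ is identified only afterwards, when reading off $\nabla_a\beta_b$.
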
	
The proof involves lengthy derivations and is therefore displayed in Appendix \ref{app:prfthm2}. This corollary generalizes the result for m-Kropina or VGR geometries derived in \cite{Fuster:2018djw}, which is obtained by setting $c=1$ and $m=0$ in \eqref{eq:generalABOm} and \eqref{eq:CDgeneralABOm}. Moreover it is interesting to remark that up to redefinition of constants, the same condition as \eqref{eq:CDgeneralABOm} on the covariant derivative of the $1$-form was found in \cite{LiDouglas} as condition for an $(\alpha,\beta)$-Finsler geometry to be of Douglas type.

To demonstrate the application of the corollaries we discuss some explicit example geometries. On the one hand we can look for pairs of metrics and $1$-forms which satisfy \eqref{eq:CDBerwaldAB} or \eqref{eq:CDgeneralABOm}, from which we can then build $(\alpha,\beta)$-Finsler geometries:
\begin{itemize}
	\item {\bf$\mathbf{(\alpha,\beta)}$-CCNV spacetimes}: A large family of such examples arises by considering Lorentzian spacetimes with a covariantly constant null vector (CCNV), so-called CCNV spacetimes. A CCNV spacetime in $N$ dimensions is always of the (higher-dimensional) Kundt type and can be written in local coordinates $(u,v,x^1,\ldots,x^{N-2})$ as~\cite{higherCCNV}
	\begin{align}\label{CCNVMetric}
	g = 2\,\D u\D v + 2H(u,x^i)\, \D u^2 + 2W_a(u,x^i)\,\D u \D x^a +h_{ab}(u,x^i) \,\D x^a \D x^b, \nonumber\\
	a,b=1,\dots,N-2
	\end{align}
	where $u=(1/\sqrt{2})(x^{N-1}-t)$, $v=(1/\sqrt{2})(x^{N-1}+t)$ are light-cone coordinates and for real functions $H,W_a$ and a $(N-2)$-dimensional Riemannian metric $h_{ab}$, all of them independent of coordinate $v$. An interesting subclass is given by CCNV geometries with constant scalar invariants (CSI), for which $h_{ab}$ is locally homogeneous and independent of coordinate $u$. In 4D the transverse space is two-dimensional, and local homogeneity implies constant curvature. The transverse metric is then (up to isometry) that of the 2-sphere, 2D hyperbolic space or 2D Euclidean space\footnote{In this last case the resulting spacetime is in fact VSI, meaning it has \textit{vanishing} scalar invariants.} \cite{4DCSI}.
		
	For any metric of the form (\ref{CCNVMetric}), $\D u$ is a covariantly constant null $1$-form. Thus, together, they satisfy \eqref{eq:CDBerwaldAB} with $T^a{}_{bc}=0$ and therefore any $(\alpha,\beta)$-Finsler geometry constructed from them is Berwald. The functional dependence of the Finsler Lagrangian $L(\mathfrak{a},\beta)$ can be chosen nearly arbitrarily, as long as it is homogeneous and regular enough to define a Finsler geometry. In addition the Cartan non-linear connection coincides with the Levi-Civita connection of $g$, cf.\,\eqref{eq:geodsprayB}. 
	
	\item {\bf$\mathbf{(\alpha,\beta)}$-Kundt spacetimes}: Consider a $(\alpha,\beta)$-Finsler Lagrangian of the form (\ref{eq:generalABOm}), with a 4D Lorentzian metric $g$ of the Kundt type given by
	\begin{align} \label{Kundt}
	g=2du&\left(dv + H(u,v,x^i)\; du +W_1(u,x^i)\,dx^1+W_2(u,x^i)\,dx^2\right) \nonumber\\
	&+h_{ab}(u,x^i) \, \D x^a \D x^b,\;\; \;\qquad\qquad  a,b=1,2  
	\end{align}
	and the null $1$-form $du$, expressed in coordinates $(u,v,x^1,x^2)$, where $u=(1/\sqrt{2})(x^3-t)$, $v=(1/\sqrt{2})(x^3+t)$, $H$, $W_1$ and $W_2$ are real functions and $h_{ab}$ is a 2D Riemannian metric. A particularly interesting subclass is given again by those Kundt spacetimes of the form \eqref{Kundt} with constant scalar invariants, for which the transverse metric $h_{ab}$ is 2D Euclidean space and function $H$ is $H(u,v,x^i)=\Phi(u,x^i)+v\,\tilde{\Phi}(u,x^i) +v^2 \sigma$, with $\Phi$, $\tilde{\Phi}$ real functions and $\sigma$ a constant \cite{4DCSI}. If $\sigma=0$ we obtain $\mathbf{(\alpha,\beta)}$-vanishing scalar invariant (VSI) spacetimes, which generalize the Finsler VSI spacetimes presented in \cite{Fuster:2018djw}. Such a Lagrangian is of Berwald type. The 1-form is not covariantly constant and the only non-vanishing covariant derivative component is: 
	\begin{align}
	\nabla_ub_u = \tfrac{\partial H}{\partial v}\,.
	\end{align}
	It satisfies condition \eqref{eq:CDgeneralABOm}, note that $g^{-1}(\beta,\beta)=0$ in this case, with $q(x)=(\tfrac{\partial}{\partial v}H /c(n-1))$. The fact that $\beta$ is not covariantly constant implies that $T^a{}_{bc}\neq 0$, and of the form \eqref{tensorabc} with
	\begin{align}
	\sigma = \frac{n}{1-n}\tfrac{\partial H}{\partial v},\qquad \rho=-\sigma,\qquad \lambda = \frac{m}{n c}\sigma\,
	\end{align}	 
	where $c$, $m$, $n$ are constants appearing in the Finsler Lagrangian (\ref{eq:generalABOm}). 
	
	{\textit{Remark}}. The previous examples seem to indicate that the theorem found in \cite{Gomez-Lobo:2016qik}, which states that a  m-Kropina/VGR Finsler spacetime with 1-form $b=du$ and a Lorentzian metric $g$ of the Kundt type with $W_{i,v}=0$ and Euclidean transverse space is always Berwald, could generalize to Finsler Lagrangians of the $(\alpha,\beta)$-type with 1-form $b=du$ and Kundt metrics with $W_{i,v}=0$ and non-Euclidean transverse space.
\end{itemize}

On the other hand, Corollary \ref{corr:AB} can be used to find that certain Finsler Lagrangians are Berwald if and only if $\beta$ is covariantly constant: 
\begin{itemize}
	\item \textbf{Randers geometry}, $\Omega = \left(1+\sqrt{\frac{\beta^2}{\mathfrak{a}}}\right)^2$: For this case \eqref{eq:CDBerwaldAB2} becomes
	\begin{align}
		\dot x^c \nabla_a b_c = T^b{}_{ac}\dot x^c b_b + T^b{}_{ac}\dot x^c \dot x_b \frac{1}{\sqrt{\mathfrak{a}}}\,.
	\end{align}
	For any non-trivial choice of the tensor components $T^a{}_{bc}$ the term $T^b{}_{ac}\dot x^c \dot x_b$ is quadratic in $\dot x$ and can never cancel the $(\sqrt{\mathfrak{a}})^{-1}$ term. Thus it is impossible to make the right hand side of this expression linear in $\dot x$, as it is necessary for a solution to exist. Hence the only choice to have a chance to satisfy this equation is to set $T^a{}_{bc}(x) = 0$, which is a known result for positive definite Finsler geometries.
	
	\item An \textbf{exponential Finsler Lagrangian}, $\Omega =  e^{-\frac{\beta^2}{\mathfrak{a}}}$: For this case \eqref{eq:CDBerwaldAB2} becomes
	\begin{align}
	\dot x^c \nabla_a b_c = T^b{}_{ac}\dot x^c b_b - T^b{}_{ac}\dot x^c \dot x_b \left( \frac{1}{\beta} + \frac{\beta}{\mathfrak{a}}\right)\,.
	\end{align}
	Again it is clear that the last term can never be linear in $\dot x$ for any choice of $T^a{}_{bc}$ since it is impossible that $T^b{}_{ac}\dot x^c \dot x_b$ cancel the different denominators in the bracket simultaneously. Hence, as in the Randers case, these exponential kind of Finsler geometries can only be of Berwald type if $\nabla_a b_b = 0$, which is a new result.
\end{itemize}

\section{Conclusion}\label{sec:conc}
In Theorem~\ref{thm:1} we found a simple geometric partial differential equation, which classifies under which conditions a given Finsler Lagrangian is of Berwald type. Equation \eqref{eq:BerwCond} states that a Finsler Lagrangian written in the form $L = \Omega(x,\dot x) g(\dot x, \dot x)$, with $g$ being an auxiliary Riemannian metric, is of Berwald type if and only if the $0$-homogeneous factor $\Omega$ has a specific behaviour under the tangent bundle lift of the Levi-Civita covariant derivative of the  metric $g$. It needs not be covariantly constant with respect to this derivative, but the deviation of being covariantly constant must be sourced by a $(1,2)$-tensor field $T$ on the base manifold $M$ in a very specific way. This criterion identifies Berwald Finsler geometries among Finslerian geometries in a simple way. Moreover, we demonstrated that it is not necessary to choose a Riemannian metric for this decomposition, but any auxiliary (possibly indefinite) metric $\tilde g$ can be used, taking into consideration only the subbundle of $\mathcal A$ where $\tilde g(\dot x, \dot x)$ is non-vanishing, as we discussed below \eqref{eq:differentmetrics}.

The application of this result to $(\alpha,\beta)$-Finsler Lagrangians in Corollary \ref{corr:AB} leads to a necessary and sufficient condition on the Levi-Civita covariant derivative of the $1$-form defining $\beta$, which makes the geometry of Berwald type. In particular, in Corollary~\ref{corr:3}, we demonstrated how a specific choice of the tensor $T$ leads to new $(\alpha,\beta)$-Finsler Lagrangians, generalized m-Kropina/VGR geometries, which can be of Berwald type without demanding that $\beta$ is covariantly constant.

Since Berwald geometries are Finsler geometries close to (pseudo)-Riemannian geometries, their identification is of great value for the application of Finsler geometry in physics, since theories of gravity based on Berwald geometry can be seen as theories which are close to general relativity \cite{Fuster:2018djw,Hohmann:2018rpp,Pfeifer:2011xi}.

An interesting future research direction is to answer the question if the Berwald condition found here is related to the existence of a special frame basis of the tangent spaces of $T_xM$, as it was found in \cite{Gomez-Lobo:2016qik} for the case of m-Kropina/VGR Finsler Lagrangians.

\section*{Acknowledgments}
CP thanks N. Voicu, M. Hohmann and V. Perlick for many intensive and productive discussions and collaboration on Finsler geometry. CP was supported by the Estonian Ministry for Education and Science and the European Regional Development Fund through the Center of Excellence TK133 ``The Dark Side of the Universe'' and  was funded by the Deutsche Forschungsgemeinschaft (DFG, German Research Foundation) - Project Number 420243324. AF thanks S. Hervik for his feedback on CSI spacetimes and A.\,Ach\'{u}carro for everything. The work of A.\,Fuster is part of the research program of the Foundation for Fundamental Research on Matter (FOM), which is financially supported by the Netherlands \mbox{Organisation} for Scientific Research (NWO).

\appendix

\section{Proof of Corollary 3}\label{app:prfthm2}
In this appendix we display the details for the proof of corollary \ref{corr:3}.

\begin{proof}[Proof of Theorem 2]
	The most general tensor components $T^a{}_{bc}$, which can be constructed from the components of a $1$-form and a metric on $M$, $b_a$ and $g_{ab}$, is characterized by three functions $\lambda(x),\rho(x)$ and $\sigma(x)$ and is given by 
	\begin{align}
	T^a{}_{bc} = \lambda \beta^a b_b b_c + \rho (b_c \delta^a_b + b_b \delta^a_c) + \sigma \beta^a g_{bc}\,.
	\end{align} 
	Inserting this expression into the result of Corollary~\ref{corr:AB}, see equation \eqref{eq:CDBerwaldAB2}, yields
	\begin{align}\label{eq:CDbetaThm2}
	\dot x^c \nabla_a b_c 
	&= b_a \frac{\rho(\mathfrak{a}^2 \Omega + \beta^2 \mathfrak{a} \Omega') + \lambda (\beta^2 \mathfrak{a} \Omega +(\beta^2 g^{-1}(\beta,\beta)\mathfrak{a} - \beta^4)\Omega' )}{\beta \mathfrak{a} \Omega'}\nonumber\\ 
	&+ \dot x_a \left(\sigma g^{-1}(\beta,\beta) + (\sigma+\rho) \left(\frac{\Omega}{\Omega'} - \frac{\beta^2}{\mathfrak{a}}\right)\right)\,.
	\end{align}
	In order for this equation to have a solution for the $1$-form components $b_a$, both terms on the right hand side must be linear in their dependence on $\dot x$. For the second term, proportional to $\dot x_a$, this means that either $\rho=-\sigma$ or that
	\begin{align}
	\dot{\partial}_a \left( \frac{\Omega}{\Omega'} - \frac{\beta^2}{\mathfrak{a}}\right)=0\,.
	\end{align}
	Setting $s=\frac{\beta^2}{\mathfrak{a}}$ this partial differential equation can easily be solved by $\Omega = (c_1 + s) e^{c_2}$, where $c_1, c_2$, in general, can be functions on $M$. However, in order that the resulting $L$ defines a $(\alpha,\beta)$ geometry they actually must be constants. This kind of solutions are trivial since for them $L = \mathfrak{a} c + \beta^2$ is quadratic in $\dot x$ and thus not only Berwald but (pseudo)-Riemannian. 
	
	To investigate the other case assume $\rho=-\sigma$. Equation \eqref{eq:CDbetaThm2} becomes
	\begin{align}\label{eq:ACDbeta}
	\dot x^c \nabla_a b_c 
	&= -b_a \left( \beta ( \sigma  - \lambda  g^{-1}(\beta,\beta)) + \frac{\Omega}{\Omega'}\left(\sigma \frac{\mathfrak{a}}{\beta} - \lambda \beta\right) + \lambda \frac{\beta^3}{\mathfrak{a}}\right)\nonumber\\ 
	&+ \dot x_a \sigma g^{-1}(\beta,\beta)\,.
	\end{align}
	To be linear in $\dot x$, the factor multiplying $b_a$ must satisfy that its second partial derivative with respect to $\dot x$ must vanish, which is ensured if and only if
	\begin{align}
	\dot{\partial}_a\dot{\partial}_b \left(\frac{\Omega}{\Omega'}\left(\sigma \frac{\mathfrak{a}}{\beta} - \lambda \beta\right) + \lambda \frac{\beta^3}{\mathfrak{a}}\right)=0\,.
	\end{align}
	Performing the derivatives yields terms proportional to $g_{ab}$, $\dot x_a b_b$, $\dot x_b b_a$, $b_a b_b$ and $\dot x_a \dot x_b$, which all have to vanish. Their derivation is straightforward but lengthy and was done with help of the computer algebra add-on xAct for Mathematica \cite{xact}. The factor multiplying $g_{ab}$ is given by
	\begin{align}
		2 \left(\frac{\Omega}{\Omega'} \left( \frac{\sigma}{\beta} - \frac{2 \lambda \beta^3 \Omega''}{\mathfrak{a}^2\Omega'} + \frac{\sigma \beta\Omega''}{\mathfrak{a}\Omega'}\right) - \sigma \frac{\beta}{\mathfrak{a}}\right)
	\end{align}
	and must be solved for $\Omega$ such that it is identically zero. Multiplying this expression by $\beta$ and $\Omega'^2$ as well as introducing again $s=\frac{\beta^2}{\mathfrak{a}}$ results in the differential equation
	\begin{align}\label{eq:diff.eq.omega}
		\Omega' \Omega \left( \sigma + s \frac{\Omega''}{\Omega' } \left( \sigma  - 2 \lambda s \right) \right)- \sigma \Omega'^2 s = 0\,.
	\end{align}

	The solution of this equation can be obtained with help of Mathematica and is
	\begin{align}\label{eq:Omegas}
	\Omega(s) = c_2 s^{-\frac{\sigma}{c_1}}(s \lambda + c_1)^{\frac{\sigma}{c_1}+1}\,,
	\end{align}
	where $c_1 = c_1(x)$ and $c_2 = c_2(x)$ are functions on $M$. To verify that we found the desired solution, we evaluate \eqref{eq:ACDbeta} with $\Omega$ as in \eqref{eq:Omegas} and see that the equation is now consistently linear in $\dot x$. Note that
	\begin{align}
	\Omega' = c_2 s^{-(\frac{\sigma}{c_1}+1)}(s\lambda+c_1)^{\frac{\sigma}{c_1}}(\lambda s - \sigma)\,,
	\end{align}
	which implies
	\begin{align}
	\frac{\Omega}{\Omega'} = \frac{s(s\lambda + c_1)}{\lambda s - \sigma}\,.
	\end{align}
	Thus, the relevant term in \eqref{eq:ACDbeta} becomes
	\begin{align}
	\frac{\Omega}{\Omega'}\left(\sigma \frac{\mathfrak{a}}{\beta} - \lambda \beta\right) + \lambda \frac{\beta^3}{\mathfrak{a}} = \frac{s(s\lambda + c_1)}{\lambda s - \sigma} \left(\frac{\sigma}{s}-\lambda\right)\beta + \lambda s \beta = -c_1 \beta\,,
	\end{align}
	which is indeed linear in $\dot x$.		
	
	In order to define a $(\alpha,\beta)$ geometry these functions must be chosen such that $\Omega$ depends only via $s$ on the tangent space points $(x,\dot x)$, so in particular on the base manifold point $x$. This requirement immediately implies that we must choose $c_1(x) = \sigma(x)/n$ for a constant $n$, which turns $\eqref{eq:Omegas}$ into
	\begin{align}
		\Omega(s) = c_2 s^{-n}\left(s \lambda + \frac{\sigma}{n}\right)^{n+1}\,.
	\end{align}
	Choosing now $c_2^{\frac{1}{n+1}} = \frac{m}{\lambda}$, where $m$ is a constant, yields
	\begin{align}
	\Omega(s) = s^{-n}\left( m s+ \frac{m}{n} \frac{\sigma}{\lambda}\right)^{n+1}\,.
	\end{align}
	Hence to obtain a $(\alpha,\beta)$ geometry the fraction $\frac{\sigma}{\lambda}$ must be constant and we can identify the constant $c=\frac{m}{n} \frac{\sigma}{\lambda}$, which finally turns \eqref{eq:Omegas} into the form \eqref{eq:generalABOm}.
	
	To verify equation \eqref{eq:CDgeneralABOm} we employ $\lambda = \frac{m \sigma}{c n}$ in \eqref{eq:ACDbeta} to obtain
	\begin{align}
	\dot x^c \nabla_a b_c = -b_a \beta\left(  \sigma  - \frac{m \sigma}{c n}  g^{-1}(\beta,\beta) - \frac{\sigma}{n} \right) + \dot x_a \sigma g^{-1}(\beta,\beta))\,.
	\end{align}
	Applying a $\dot x$ derivative and factoring $\frac{\sigma}{nc}$ gives
	\begin{align}
	\nabla_ab_b = \frac{\sigma}{nc}\left([c(1-n)+m g^{-1}(\beta,\beta)]b_a b_b + c n g^{-1}(\beta,\beta)g_{ab}\right)\,.
	\end{align}
	Last but not least $ \frac{\sigma}{nc}$ can be replaced by an arbitrary function $q$, which corresponds to the freedom that the tensor components $ T^a{}_{bc}$ are fixed only up to multiplication with an overall function.
\end{proof}

\bibliographystyle{elsarticle-num}
\bibliography{BerwaldFinsler}

\begin{thebibliography}{10}
\expandafter\ifx\csname url\endcsname\relax
  \def\url#1{\texttt{#1}}\fi
\expandafter\ifx\csname urlprefix\endcsname\relax\def\urlprefix{URL }\fi
\expandafter\ifx\csname href\endcsname\relax
  \def\href#1#2{#2} \def\path#1{#1}\fi

\bibitem{Riemann}
B.~Riemann, {\"Uber die Hypothesen, welche der Geometrie zu Grunde liegen},
  {Abhandlungen der K\"oniglichen Gesellschaft der Wissenschaften zu
  G\"ottingen} 13.

\bibitem{Finsler}
P.~Finsler, {\"Uber Kurven und Fl\"achen in allgemeinen R\"aumen}, Ph.D.
  thesis, {Georg-August Universit\"at zu G\"ottingen} (1918).

\bibitem{Bao}
D.~Bao, S.-S. Chern, Z.~Shen, {An Introduction to Finsler-Riemann Geometry},
  Springer, New York, 2000.

\bibitem{Miron}
R.~Miron, I.~Bucataru, {Finsler Lagrange Geometry}, Editura Academiei Romane,
  Bucharest, 2007.

\bibitem{Javaloyes:2018lex}
M.~A. Javaloyes, M.~S\'anchez, {On the definition and examples of cones and
  Finsler spacetimes}\href {http://arxiv.org/abs/1805.06978}
  {\path{arXiv:1805.06978}}, \href
  {http://dx.doi.org/10.1007/s13398-019-00736-y}
  {\path{doi:10.1007/s13398-019-00736-y}}.

\bibitem{Pfeifer:2019wus}
C.~Pfeifer, {Finsler spacetime geometry in Physics}, Int. J. Geom. Meth. Mod.
  Phys. 16~(supp02) (2019) 1941004.
\newblock \href {http://arxiv.org/abs/1903.10185} {\path{arXiv:1903.10185}},
  \href {http://dx.doi.org/10.1142/S0219887819410044}
  {\path{doi:10.1142/S0219887819410044}}.

\bibitem{Hohmann:2018rpp}
M.~Hohmann, C.~Pfeifer, N.~Voicu, {Finsler gravity action from variational
  completion}, Phys. Rev. D 100~(6) (2019) 064035.
\newblock \href {http://arxiv.org/abs/1812.11161} {\path{arXiv:1812.11161}},
  \href {http://dx.doi.org/10.1103/PhysRevD.100.064035}
  {\path{doi:10.1103/PhysRevD.100.064035}}.

\bibitem{Hohmann:2019sni}
M.~Hohmann, C.~Pfeifer, N.~Voicu, {Relativistic kinetic gases as direct sources
  of gravity}, Phys. Rev. D 101~(2) (2020) 024062.
\newblock \href {http://arxiv.org/abs/1910.14044} {\path{arXiv:1910.14044}},
  \href {http://dx.doi.org/10.1103/PhysRevD.101.024062}
  {\path{doi:10.1103/PhysRevD.101.024062}}.

\bibitem{Bernal:2020bul}
A.~Bernal, M.~A. Javaloyes, M.~S\'anchez, {Foundations of Finsler Spacetimes
  from the Observers\textquoteright{} Viewpoint}, Universe 6~(4) (2020) 55.
\newblock \href {http://arxiv.org/abs/2003.00455} {\path{arXiv:2003.00455}},
  \href {http://dx.doi.org/10.3390/universe6040055}
  {\path{doi:10.3390/universe6040055}}.

\bibitem{Berwald1926}
L.~Berwald, \href{https://doi.org/10.1007/BF01283825}{Untersuchung der
  kr{\"u}mmung allgemeiner metrischer r{\"a}ume auf grund des in ihnen
  herrschenden parallelismus}, Mathematische Zeitschrift 25~(1) (1926) 40--73.
\newblock \href {http://dx.doi.org/10.1007/BF01283825}
  {\path{doi:10.1007/BF01283825}}.
\newline\urlprefix\url{https://doi.org/10.1007/BF01283825}

\bibitem{TAVAKOL198523}
R.~Tavakol, N.~Van Den~Bergh, {Finsler spaces and the underlying geometry of
  space-time}, Physics Letters A 112~(1) (1985) 23 -- 25.
\newblock \href
  {http://dx.doi.org/https://doi.org/10.1016/0375-9601(85)90453-0}
  {\path{doi:https://doi.org/10.1016/0375-9601(85)90453-0}}.

\bibitem{Tavakol1986}
R.~K. Tavakol, N.~Van~den Bergh, {Viability criteria for the theories of
  gravity and finsler spaces}, General Relativity and Gravitation 18~(8) (1986)
  849 -- 859.
\newblock \href {http://dx.doi.org/10.1007/BF00770205}
  {\path{doi:10.1007/BF00770205}}.

\bibitem{Tavakol2009}
R.~Tavakol, {Geometry of spacetime and Finsler geometry}, International Journal
  of Modern Physics A 24~(08n09) (2009) 1678 -- 1685.
\newblock \href {http://dx.doi.org/10.1142/S0217751X09045224}
  {\path{doi:10.1142/S0217751X09045224}}.

\bibitem{Szilasi2011}
J.~Szilasi, R.~L. Lovas, K.~D. Cs., Several ways to {B}erwald manifolds - and
  some steps beyond, Extracta Math. 26 (2011) 89--130.
\newblock \href {http://arxiv.org/abs/1106.2223} {\path{arXiv:1106.2223}}.

\bibitem{Matsumoto}
M.~Matsumoto, Finsler spaces with $(\alpha,\beta)$-metric of {D}ouglas type,
  Tensor N.S. 60 (1998) 123--134.

\bibitem{ShenBacsoCheng}
S.~Bacso, S.~Cheng, Z.~Shen, Curvature properties of $(\alpha,\beta)$-metrics,
  in: S.~Sabau, H.~Shimada (Eds.), Sapporo 2005-In Memory of Makoto Matsumoto,
  Vol.~48 of Advanced Studies in Pure Mathematics, Mathematical Society of
  Japan, 2007, pp. 73 -- 110.

\bibitem{Fuster:2018djw}
A.~Fuster, C.~Pabst, C.~Pfeifer, {Berwald spacetimes and very special
  relativity}, Phys. Rev. D98~(8) (2018) 084062.
\newblock \href {http://arxiv.org/abs/1804.09727} {\path{arXiv:1804.09727}},
  \href {http://dx.doi.org/10.1103/PhysRevD.98.084062}
  {\path{doi:10.1103/PhysRevD.98.084062}}.

\bibitem{Kostelecky:2011qz}
A.~Kostelecky, {Riemann-Finsler geometry and Lorentz-violating kinematics},
  Phys. Lett. B701 (2011) 137--143.
\newblock \href {http://arxiv.org/abs/1104.5488} {\path{arXiv:1104.5488}},
  \href {http://dx.doi.org/10.1016/j.physletb.2011.05.041}
  {\path{doi:10.1016/j.physletb.2011.05.041}}.

\bibitem{Gomez-Lobo:2016qik}
A.~García-Parrado Gómez-Lobo, E.~Minguzzi, {Pseudo-Finsler spaces modeled on
  a pseudo-Minkowski space}, Rept. Math. Phys. 82 (2018) 29--42.
\newblock \href {http://arxiv.org/abs/1612.00829} {\path{arXiv:1612.00829}},
  \href {http://dx.doi.org/10.1016/S0034-4877(18)30069-7}
  {\path{doi:10.1016/S0034-4877(18)30069-7}}.

\bibitem{Szabo}
Z.~Szab\'o, Positive definite {B}erwald spaces, Tensor.

\bibitem{Fuster:2020upk}
A.~Fuster, S.~Heefer, C.~Pfeifer, N.~Voicu, {On the non metrizability of
  Berwald Finsler spacetimes}, Universe 6~(5) (2020) 64.
\newblock \href {http://arxiv.org/abs/2003.02300} {\path{arXiv:2003.02300}},
  \href {http://dx.doi.org/10.3390/universe6050064}
  {\path{doi:10.3390/universe6050064}}.

\bibitem{Hohmann:2020mgs}
M.~Hohmann, C.~Pfeifer, N.~Voicu, {Cosmological Finsler Spacetimes}, Universe
  6~(5) (2020) 65.
\newblock \href {http://arxiv.org/abs/2003.02299} {\path{arXiv:2003.02299}},
  \href {http://dx.doi.org/10.3390/universe6050065}
  {\path{doi:10.3390/universe6050065}}.

\bibitem{Randers}
G.~Randers, On an asymmetrical metric in the four-space of general relativity,
  Phys. Rev. 59 (1941) 195--199.
\newblock \href {http://dx.doi.org/10.1103/PhysRev.59.195}
  {\path{doi:10.1103/PhysRev.59.195}}.

\bibitem{Kropina}
V.~Kropina, {On projective two-dimensional Finsler spaces with a special
  metric}, Trudy Sem. Vektor. Tenzor. Anal. 11 (1961) 277 -- 292.

\bibitem{Cohen:2006ky}
A.~G. Cohen, S.~L. Glashow, Very special relativity, Phys.Rev.Lett. 97 (2006)
  021601.
\newblock \href {http://arxiv.org/abs/hep-ph/0601236}
  {\path{arXiv:hep-ph/0601236}}.

\bibitem{Gibbons:2007iu}
G.~Gibbons, J.~Gomis, C.~Pope, General very special relativity is {F}insler
  geometry, Phys.Rev. D76 (2007) 081701.
\newblock \href {http://arxiv.org/abs/0707.2174} {\path{arXiv:0707.2174}}.

\bibitem{Bogoslovsky}
G.~Bogoslovsky, A special-relativistic theory of the locally anisotropic
  space-time, Il Nuovo Cimento B Series 11 40 (1977) 99.

\bibitem{LiDouglas}
B.~Li, Y.~Shen, Z.~Shen, On a class of douglas metrics, Studia Scientiarum
  Mathematicarum Hungarica - STUD SCI MATH HUNG 46 (2009) 355--365.
\newblock \href {http://dx.doi.org/10.1556/SScMath.2009.1096}
  {\path{doi:10.1556/SScMath.2009.1096}}.

\bibitem{higherCCNV}
D.~McNutt, A.~Coley, N.~Pelavas, Isometries in higher dimensional {CCNV}
  spacetimes, IJGMMP 06 (2009) 419.

\bibitem{4DCSI}
A.~Coley, S.~Hervik, N.~Pelavas, Lorentzian spacetimes with constant curvature
  invariants in four dimensions, Classical and Quantum Gravity 26~(12) (2009)
  125011.
\newblock \href {http://dx.doi.org/10.1088/0264-9381/26/12/125011}
  {\path{doi:10.1088/0264-9381/26/12/125011}}.

\bibitem{Pfeifer:2011xi}
C.~Pfeifer, M.~N.~R. Wohlfarth, {Finsler geometric extension of Einstein
  gravity}, Phys. Rev. D85 (2012) 064009.
\newblock \href {http://arxiv.org/abs/1112.5641} {\path{arXiv:1112.5641}},
  \href {http://dx.doi.org/10.1103/PhysRevD.85.064009}
  {\path{doi:10.1103/PhysRevD.85.064009}}.

\bibitem{xact}
J.~M. Martín-García, xAct: Efficient tensor computer algebra for Mathematica,
  http://xact.es/, 2002 - 2018.

\end{thebibliography}

\end{document}